\documentclass[a4paper, 12pt]{article}
\usepackage{amsmath,amssymb,amsthm}
\usepackage{hyperref}
\usepackage{graphicx}
\usepackage[a4paper,hmargin=2.5cm,vmargin=2.5cm]{geometry}

\pdfoutput=1

\theoremstyle{definition}
\newtheorem{proposition}{Proposition}
\newtheorem{lemma}[proposition]{Lemma}
\newtheorem*{remark}{Remark}
\newtheorem{theorem}[proposition]{Theorem}

\author{Ivan Frolov}
\date{\today}
\title{On triangulations with fixed areas}

\begin{document}

\maketitle

\begin{abstract}
We prove that the number of dissections of a given polygon into triangles with fixed areas of faces is finite and that an equidissection is algebraic as long as the vertices of the original polygon have algebraic coordinates.
\end{abstract}

\section{Introduction}

A \emph{dissection} of a polygon $P$ in $\mathbb R^2$ is a partition into triangles. A dissection is called an \emph{equidissection} if all triangles have equal areas. By a \emph{triangulation} we mean a dissection which has no vertices on the boundary of $P$ except for the vertices of $P$ and such that the intersection of any two triangles is either a common edge, a common vertex, or the empty set.

In \cite{Mon} P. Monsky proved that every equidissection of a square has an even number of triangles. The ingenious argument uses 2-adic valuation and Sperner's lemma.
Various generalisations have been found, for example for centrally symmetric polygons (\cite{Mon90}).

Motivated by a question asked by R. Kenyon in \cite{Ken} we prove the following.

\begin{theorem}
The number of triangulations of a polygon $P$ in $\mathbb R^2$ with fixed number of faces and fixed areas of faces is finite.
\end{theorem}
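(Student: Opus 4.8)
The plan is to reduce the statement to finitely many polynomial systems and then to a rigidity statement about triangulations. First, by Euler's formula a triangulation of $P$ with $n$ triangles has exactly $k=\tfrac12(n-b+2)$ interior vertices, where $b$ is the number of vertices of $P$; so fixing $n$ fixes the total number of vertices, and there are only finitely many combinatorial types of triangulation of $P$ with $n$ triangles --- finitely many abstract triangulated disks on $b+k$ vertices with the $b$ boundary vertices in the cyclic order prescribed by $P$ --- and, for each, finitely many ways to match the prescribed multiset of areas to the faces. Hence it suffices to bound the number of realizations of one fixed combinatorial type $\mathcal T$ together with one fixed assignment $f\mapsto a_f$ of areas to its faces.

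Fix such a $\mathcal T$ and such areas. A realization is a point $p=(v_1,\dots,v_k)\in\mathbb R^{2k}$ giving the positions of the interior vertices (the boundary vertices being pinned to the vertices of $P$), subject to finitely many strict polynomial inequalities expressing that the straight-line drawing is an embedded triangulation of $P$ of type $\mathcal T$ (each triangle nondegenerate with the prescribed orientation, triangles sharing an edge lying on opposite sides of it, no two edges crossing, the union being $P$), together with the equations $\operatorname{Area}(f)=a_f$, which are quadratic in the coordinates; write $\Phi=(\operatorname{Area}(f))_f\colon\mathbb R^{2k}\to\mathbb R^{n}$ for the area map. The realization set $X$ is thus bounded (the $v_i$ lie in $P$) and semialgebraic, so it has finitely many connected components and is finite if and only if it is $0$-dimensional. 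If $X$ had a positive-dimensional component then, by the curve selection lemma, it would contain a nonconstant real-analytic arc $\gamma\colon[0,\varepsilon)\to X$, along which the combinatorial type and all face areas are constant; writing $\gamma(t)=\gamma(0)+t^{d}w+O(t^{d+1})$ with $w\neq 0$, and using that $\Phi$ is quadratic so that the coefficient of $t^{d}$ in $\Phi(\gamma(t))$ is both zero and equal to $D\Phi(\gamma(0))w$, we get $w\in\ker D\Phi(\gamma(0))$. Thus $\gamma(0)$ would admit a nonzero \emph{infinitesimal area-preserving flex}: a nonzero velocity field $(\dot v_i)$ on the interior vertices, with boundary vertices held still, for which $\tfrac{d}{dt}\operatorname{Area}(f)=0$ for every face $f$. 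So it suffices to prove the rigidity statement: \emph{no embedded triangulation of $P$ with pinned boundary admits a nonzero infinitesimal area-preserving flex.}

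Since $\tfrac{d}{dt}\operatorname{Area}(xyz)=\tfrac12\bigl(\det(\dot x,y-z)+\det(\dot y,z-x)+\det(\dot z,x-y)\bigr)$, the equation of a triangle with two pinned vertices $w,w'$ is $\det(\dot v,w-w')=0$, forcing its third vertex $v$ to move parallel to $ww'$; and more generally, once the velocities of all neighbours of an interior vertex $v$ are known, the equations of the triangles around $v$ determine $\dot v$, because the link of $v$ genuinely encircles $v$, so its edge vectors are not all parallel and two of those equations are independent linear equations in $\dot v$. Propagating these constraints inward from $\partial P$ settles configurations with enough boundary contact, but in general --- already for nested triangulations --- no interior vertex has all its neighbours pinned, and one must run the argument around a cycle of mutually dependent interior vertices, closing the loop with the one universal linear relation $\sum_f\operatorname{Area}(f)=\operatorname{Area}(P)$ among the $n=2k+b-2$ equations together with a rank computation for $D\Phi(p)$. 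I expect this cyclic case to be the main obstacle: at a special (yet still embedded) configuration the ``product around the cycle'' controlling it could a priori equal $1$, making $D\Phi(p)$ drop rank, and one must either show this cannot happen for an embedded triangulation --- plausibly because the link of an interior vertex is never contained in a line, which forbids the determinant vanishings the combinatorics alone would permit --- or, if such infinitesimally flexible embedded triangulations do exist, return to $\gamma$ and extract the contradiction from its higher-order Taylor coefficients, i.e. from the tangent cone at $\gamma(0)$ of the fibre $\Phi^{-1}(\Phi(\gamma(0)))$.

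Finally, once finiteness is proved, each realizing configuration is an isolated point of the algebraic set defined by the area equations over the field $\mathbf k$ generated by the coordinates of the vertices of $P$, hence has coordinates algebraic over $\mathbf k$ when $P$ does, which is the algebraicity statement announced in the abstract.
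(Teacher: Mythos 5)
Your reductions are sound as far as they go: Euler's formula does bound the number of combinatorial types, the realization set for one fixed type with fixed areas is a bounded semialgebraic set, and the curve-selection argument correctly shows that a positive-dimensional component would force a nonzero infinitesimal area-preserving flex at some embedded configuration (the quadraticity of $\Phi$ makes the leading-coefficient computation legitimate). But the argument stops exactly where the difficulty begins: the rigidity statement --- that no embedded triangulation of $P$ with pinned boundary admits a nonzero infinitesimal area-preserving flex --- is never proved. Your propagation of velocities inward handles only vertices that eventually see pinned neighbours; you yourself identify the cyclic case as ``the main obstacle'' and then offer two mutually exclusive fallback strategies (show $D\Phi$ never drops rank at an embedded configuration, or else analyse higher-order Taylor coefficients of the arc) without carrying out either. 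Since there are $n=2k+b-2$ area equations in $2k$ unknowns with at least the one universal relation $\sum_f \mathrm{Area}(f)=\mathrm{Area}(P)$, the linearized system is essentially square for $b=3$, and nothing you write rules out a rank drop at a special embedded configuration. This unproved lemma is the entire content of the theorem in your formulation, so the proof is incomplete; note also that infinitesimal rigidity is strictly stronger than the finiteness you actually need, so if it fails you are forced into the higher-order analysis you only gesture at.

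For comparison, the paper sidesteps the local linearization entirely. It complexifies and projectivizes the configuration space of a fixed combinatorial type into a projective variety $X\subset(\mathbb{CP}^2)^{N-n}$; if some coordinate projection of $X$ were infinite, being a closed subvariety of $\mathbb{CP}^2$ of positive dimension it would meet the line at infinity. At such a degenerate configuration every face containing a vertex at infinity becomes a collinear (hence area-zero) triple, and summing complexified oriented areas over the faces of the subcomplex spanned by the finite vertices gives $\sum S_f=\sum S'_f$ with $S_f\ge S'_f$ and strict inequality somewhere --- a global, combinatorial contradiction requiring no control of the Jacobian. To complete your route you would have to prove the infinitesimal rigidity lemma (or execute the tangent-cone fallback); as written, you have only reduced the theorem to a statement you have not established.
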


Our method also allows applies to a question of Kasimatis and Stein (\cite{KS}, p.293, question~5), giving the following.

\begin{theorem}
Suppose that all vertices of a polygon $P$ have algebraic coordinates. If a triangulation of $P$ is an equidissection, then the coordinates of all vertices of this triangulation are algebraic.
\end{theorem}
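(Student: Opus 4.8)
The plan is to reduce Theorem~2 to Theorem~1 together with a transfer principle for real closed fields. Fix a triangulation $T$ of $P$ that is an equidissection into $n$ faces, and freeze its combinatorial type: which triples of vertices span the triangles, and which vertices lie on $\partial P$ (these are exactly the vertices of $P$) versus in the interior. Since $P$ has algebraic coordinates, $\operatorname{Area}(P)$ is algebraic, so each face of $T$ has the algebraic area $A = \operatorname{Area}(P)/n$. Let $x \in \mathbb{R}^N$, with $N = 2\cdot\#\{\text{interior vertices of }T\}$, be the vector of coordinates of the interior vertices, and let $V \subseteq \mathbb{A}^N$ be the affine variety cut out by the equations ``the signed area of the $i$-th face equals $\varepsilon_i A$'', where $\varepsilon_i \in \{\pm 1\}$ records the orientation of that face in $T$ and the remaining two vertices of each face are either fixed vertices of $P$ (algebraic constants) or further unknowns. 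These are degree-$2$ polynomial equations with coefficients in $\overline{\mathbb{Q}}$, so $V$ is defined over $\overline{\mathbb{Q}}$, and by construction $x \in V(\mathbb{R})$.

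First I would show that $x$ is an isolated point of $V(\mathbb{R})$. If not, there is a sequence $x_m \to x$ in $V(\mathbb{R})$ with $x_m \ne x$, and after passing to a subsequence the $x_m$ are pairwise distinct. Being a genuine triangulation of $P$ of the given combinatorial type is an open condition near $x$: under a small perturbation the interior vertices stay inside $P$, two faces sharing a combinatorial edge stay on opposite sides of it, and faces that are disjoint or share only a vertex stay so by compactness. Hence for large $m$ the configuration $x_m$ is again a triangulation of $P$ with $n$ faces all of area $A$, i.e.\ an equidissection. This yields infinitely many such triangulations, contradicting Theorem~1. Therefore $x$ is isolated in $V(\mathbb{R})$.

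It remains to prove the soft statement that an isolated point of a real algebraic variety defined over $\overline{\mathbb{Q}}$ has algebraic coordinates. The real algebraic numbers form a real closed subfield $\mathbb{R}_{\mathrm{alg}} \subset \mathbb{R}$, so by model completeness of the theory of real closed fields (Tarski–Seidenberg) the inclusion $\mathbb{R}_{\mathrm{alg}} \subset \mathbb{R}$ is elementary. The set $S$ of isolated points of $V(\mathbb{R})$ is finite (a $0$-dimensional semialgebraic set), and ``is an isolated point of $V$'' is a first-order formula with parameters in $\mathbb{R}_{\mathrm{alg}}$; hence the sentence ``$V$ has exactly $\#S$ isolated points'' also holds over $\mathbb{R}_{\mathrm{alg}}$, and the $\#S$ witnessing points, being isolated in $V(\mathbb{R}_{\mathrm{alg}})$, stay isolated in $V(\mathbb{R})$ by the same transfer, so they must be all of $S$. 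Thus $S \subset \mathbb{R}_{\mathrm{alg}}^N$, so $x$ has algebraic coordinates; together with the (algebraic) vertices of $P$ this proves the theorem. Equivalently, by Tarski–Seidenberg the finite set $S$ is semialgebraic over $\overline{\mathbb{Q}}$, and a finite semialgebraic set over a field consists of points algebraic over that field.

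The main obstacle I anticipate is the first step: converting the combinatorial finiteness of Theorem~1 into genuine topological isolation of $x$ inside $V(\mathbb{R})$. One must be certain that all sufficiently nearby real solutions of the area equations are honest triangulations in the paper's sense, rather than configurations that satisfy the signed-area equations but develop overlapping faces or have a vertex drift onto $\partial P$; this is exactly the openness claim above and is where the precise definition of ``triangulation'' is used. Once that is in place, the algebraicity conclusion is a routine transfer argument and the set-up is bookkeeping about which coordinates are the unknowns.
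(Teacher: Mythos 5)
Your argument is correct in substance but takes a genuinely different route from the paper. The paper deduces Theorem~2 directly from its Proposition~3: the \emph{complex projective} configuration variety $X$ is cut out by equations whose coefficients (the coordinates of the vertices of $P$ and the common face area $\operatorname{Area}(P)/n$) are algebraic, so $X$ is defined over $\overline{\mathbb Q}$; Proposition~3 shows $X$ is a finite set of points, and a finite variety defined over $\overline{\mathbb Q}$ has all its points defined over $\overline{\mathbb Q}$ (the paper's Lemma~6). The given equidissection is one of those points, and the proof is two lines. The key advantage is that finiteness of the \emph{entire} complex variety makes every solution automatically isolated, so no perturbation argument is needed. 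Your route uses only the real statement of Theorem~1 together with a transfer principle for real closed fields; that transfer step (the isolated points of $V(\mathbb R)$ form a finite set, their exact number is expressible by a first-order sentence with parameters in $\mathbb R_{\mathrm{alg}}$, hence all of them are witnessed over $\mathbb R_{\mathrm{alg}}$) is standard and correct. The cost is the openness claim you flag yourself: you must verify that every point of $V(\mathbb R)$ near $x$ is again a triangulation of $P$ in the paper's sense --- in particular, for non-convex $P$, that the perturbed triangles stay inside $P$ and still cover it, not merely that faces sharing an edge stay on opposite sides of it. This is true (the set of geometric realizations of a fixed combinatorial triangulation with fixed boundary is open in the space of interior-vertex positions), but it is precisely the planar-topology bookkeeping that the complex-projective argument sidesteps, since a nearby point of the complex variety need not be a triangulation at all for the paper's argument to apply. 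Both proofs ultimately rest on the same finiteness statement, so neither is more general; the paper's is cleaner, while yours is more elementary in that, once Theorem~1 is granted as a black box, it never leaves $\mathbb R$.
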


\begin{remark}
We believe that in Theorems 1 and 2 it is unnecessary to assume the dissection to be triangulation. However, our argument has to be modified to include more technical details.
\end{remark}

The idea of our proof is to use a projectivisation of the variety of all triangulations with given areas. Similar techniques were used in \cite{MR3148653}, \cite{MR4604929} and \cite{MR4393087} to find nontrivial polynomial relations between areas of the triangles in a square dissection.

\section{Proofs}

Let $(x_i,y_i)$ be the coordinates of a point $v_i$. The oriented area of a triangle $v_1v_2v_3$ satisfies
\[
S_{v_1v_2v_3}=\frac12
\begin{vmatrix}
x_1 & x_2 & x_3 \\
y_1 & y_2 & y_3 \\
1 & 1 & 1
\end{vmatrix}
\]
This formula allows to define the oriented area of a triangle $v_1v_2v_3$, where $v_i \in \mathbb C^2$. For $n$ points $v_1,\ldots, v_n \in \mathbb C^2$ we can define the oriented area of the 'polygon' $v_1\ldots v_n$ as
\begin{equation}
S_{v_1\ldots v_n} = \sum_{i=1}^n S_{v_iv_{i+1}p}
\end{equation}
where $v_{n+1}=v_1$ and $p \in \mathbb C^2$ is arbitrary. The result does not depend on $p$ and coincides with the usual area if $v_1\ldots v_n$ is a polygon in $\mathbb R^2$ oriented anticlockwise.

Consider a polygon $v_1 \ldots v_n$ in $\mathbb R^2$ and a triangulation of it with vertices $v_1, \ldots, v_n$, $ v_{n+1},\ldots,v_N$. We will fix the combinatorial type of the triangulation, i.e.\! the planar graph~$G$ on vertices $v_1,\ldots,v_N$.

We consider the standard embedding $\mathbb R^2 \to \mathbb {CP}^2 \colon (x,y) \to [x:y:1]$ and define the configuration space of triangulations with given areas as the algebraic subvariety $X \subset (\mathbb {CP}^2)^{N-n}$ cut by equations
\begin{equation}
2S_{v_iv_jv_k}z_iz_jz_k=
\begin{vmatrix}
x_i & x_j & x_k \\
y_i & y_j & y_k \\
z_i & z_j & z_k \\
\end{vmatrix}
\end{equation}
for each triangle $v_iv_jv_k$ of $G$, where $[x_i:y_i:z_i]$ are coordinates on $(\mathbb {CP}^2)^{N-n}$ for $i > n$ and constants for $i \le n$, and $S_{v_iv_jv_k}$ is a constant (the area of the triangle $v_iv_jv_k$).

\begin{proposition}
The variety $X$ consists of finitely many points.
\end{proposition}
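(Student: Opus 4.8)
The plan is to use completeness. Each equation (2) is homogeneous of degree one in the coordinates of every variable vertex it involves (the left‑hand side is $2S$ times $z_iz_jz_k$, the right‑hand side is linear in each column), so $X$ is a Zariski closed subset of $(\mathbb{CP}^2)^{N-n}$, hence complete. It therefore suffices to show that $X$ contains no irreducible complete curve. Suppose $C\subseteq X$ were one. Passing to its normalization gives a smooth complete curve $\widetilde C$ with morphisms $v_i\colon\widetilde C\to\mathbb{CP}^2$ for $i>n$ (and $v_i$ constant for $i\le n$) satisfying (2), at least one of which, say $v_{i_0}$, is nonconstant.

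The key input is an area identity. Let $w$ be an interior vertex of $G$ with link cycle $u_1\dots u_m$. Formula (1), applied to the polygon $u_1\dots u_m$ with auxiliary point $w$, reads $S_{u_1\dots u_m}=\sum_{\ell}S_{u_\ell u_{\ell+1}w}$; and since each triangle $wu_\ell u_{\ell+1}$ is a face of $G$, dividing (2) by $z_wz_{u_\ell}z_{u_{\ell+1}}$ shows that $S_{u_\ell u_{\ell+1}w}$, computed from the affine coordinates, equals the prescribed positive constant $S_{wu_\ell u_{\ell+1}}$. Hence the function $t\mapsto S_{u_1\dots u_m}$ is identically equal on $\widetilde C$ to the positive constant $\Sigma_w:=\sum_\ell S_{wu_\ell u_{\ell+1}}$. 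The same telescoping works for the boundary cycle $\partial R$ of the union $R$ of the closed stars of any set of interior vertices: interior edges of $R$ cancel in pairs, so $S_{\partial R}\equiv\sum_{\tau\subseteq R}S_\tau$, again a positive constant.

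Next I find a point where such a polygon degenerates. As $v_{i_0}$ is nonconstant its image is a projective curve in $\mathbb{CP}^2$, which cannot avoid the line $\{z=0\}$ (a projective curve meets every line); pick $p\in\widetilde C$ with $v_{i_0}(p)$ at infinity, and let $I_p\subseteq\{n+1,\dots,N\}$ be the nonempty set of indices $i$ with $v_i(p)$ at infinity. Evaluating (2) at $t=p$: for a face $wuu'$ of $G$ with $w\in I_p$ and $u,u'\notin I_p$ the left‑hand side vanishes, so $v_w(p),v_u(p),v_{u'}(p)$ are collinear — the finite segment $v_uv_{u'}$ becomes, at $p$, parallel to the direction in which $v_w$ escapes to infinity; and for a face with two vertices in $I_p$ and the third not in $I_p$, the two escape‑directions coincide. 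Let $J$ be a connected component of the subgraph of $G$ induced on $I_p$ and $R$ the union of the closed stars of the vertices of $J$. Then no vertex of $\partial R$ lies in $I_p$ (such a vertex would be adjacent or equal to a vertex of $J$, hence would itself lie in $J$, hence would have its whole star inside $R$), so $\partial R$ is finite at $p$; and every edge of $\partial R$ lies in a face $wuu'$ with $w\in J$, so it becomes parallel at $p$ to the corresponding escape‑direction. If all those escape‑directions are equal, $\partial R$ degenerates to a collinear configuration at $p$, so $S_{\partial R}(p)=0$, contradicting the previous paragraph.

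The hard part is precisely the clause ``if all those escape‑directions are equal''. When several vertices run off to infinity simultaneously at $p$, in possibly different directions, the region $R$ must be chosen with care and one has to propagate the coincidence of escape‑directions — which, by the observation above, holds across every edge $cc'$ of $G$ with $c,c'\in J$ unless both faces on $cc'$ have all three vertices in $I_p$ — through the combinatorics of the sub‑complex spanned by $I_p$, taking account of vertices of $I_p$ interior to that sub‑complex and of the interaction with the boundary of $P$. A couple of routine points also need handling: vertices $v_i$ whose image lies entirely in the line at infinity, and the verification that $R$ can be taken to be a disk so that $\partial R$ is a single cycle. Essentially all of the real work is in this direction bookkeeping; the completeness step and the area identity are soft.
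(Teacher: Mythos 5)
Your soft steps are sound and match the paper's strategy in spirit: $X$ is closed in $(\mathbb{CP}^2)^{N-n}$, so if it were infinite some vertex would be forced to the line at infinity at some configuration; equation (2) then forces collinearity of any face with a vertex at infinity (and coincidence of two vertices at infinity sharing a face with a finite one); and an area identity, valid as a polynomial identity and hence on the degenerate configuration, yields the contradiction once one knows that the relevant boundary polygon has zero area. (The paper gets its degenerate point from Zariski-closedness of $\pi_m(X)$ plus the fact that a positive-dimensional subvariety of $\mathbb{CP}^2$ meets every line, rather than via a complete curve, but that difference is cosmetic.)

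However, the clause you flag as ``the hard part'' --- that all the escape-directions seen by $\partial R$ coincide --- is not bookkeeping; it is the entire content of the proof, and your chosen decomposition makes it genuinely problematic. Propagation of directions fails across an edge $cc'$ of $J$ whose two adjacent faces lie entirely at infinity (equation (2) is vacuous there: both sides vanish identically), and a finite vertex can sit in the interior of your region $R$ (its whole star meeting $J$), breaking the fan argument at a boundary vertex of $\partial R$. So it is not clear that all apex directions along $\partial R$ agree, and I see no way to complete the argument with $R$ defined as the union of closed stars of a component of $I_p$. The paper resolves this by dualizing the decomposition: let $H$ be the connected component of the subgraph induced on the \emph{finite} vertices that contains $v_1,\dots,v_n$, and show that every face $f$ of $H$ that is not a face of $G$ has collinear boundary. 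The point of taking a connected component is that any finite vertex adjacent to $H$ lies in $H$, so every vertex interior to $f$ and adjacent to the boundary walk of $f$ is at infinity; the fan of triangles at each corner of $f$ then forces all these points at infinity to coincide (two distinct points at infinity cannot be collinear with a finite point), and one never needs to compare directions of vertices at infinity that do not share a triangle with a common finite vertex. Summing the resulting identity $\sum_f S'_f=S_{v_1\ldots v_n}=\sum_f S_f$ over all bounded faces of $H$ also sidesteps your worries about $R$ being a disk or $\partial R$ having several components. As written, your proposal omits this decisive step.
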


\begin{proof}
For each $n < i \le N$ consider the projection $\pi_i \colon X \to \mathbb {CP}^2$ sending a point in the configuration space to $[x_i:y_i:z_i]$. It suffices to prove that $\pi_i(X)$ is finite for all $i$. Assume the contrary, i.e.\! there exist $n < m \le N$ such that $\pi_m(X)$ is not finite.

\begin{lemma}
Under the assumptions above $\pi_m(X)$ intersects the line at infinity.
\end{lemma}

\begin{proof}
As an image of a projective variety is always Zariski closed (\cite{Har}, Theorem~3.12), $\pi_m(X)$ is a closed algebraic subvariety of $\mathbb {CP}^2$. Since it has infinitely many points, its dimension is at least~1. So $\pi_m(X)$ intersects the line at infinity (\cite{Har}, Proposition 11.4).
\end{proof}

By Lemma 4 we may choose a collection of points $u_{n+1},\ldots,u_N \in \mathbb {CP}^2$ satisfying (2) such that $u_m$ is at infinity, i.e.\! satisfies $z_m=0$.
For convenience we denote $u_i=v_i$ for $1 \le i \le n$.
After renumbering the indices we may assume that the points $u_1,\ldots,u_{m-1}$ lie in $\mathbb C^2$ and the points $u_m,\ldots,u_N$ lie at infinity. Let $H'$ be the subgraph of $G$ consisting of vertices $v_1,\ldots,v_{m-1}$ and all edges between them. Let $H$ be the connected component of $H'$ containing $v_1, \ldots v_n$.

Let $f$ be a face of $H$. Since $H$ is connected, the boundary of $f$ is connected. Let $v_{i_1} \ldots v_{i_t}$ be the closed path around the boundary of $f$ anticlockwise (one vertex may appear several times, one edge may appear twice in this path).

\begin{lemma}
If $f$ is not a face of $G$, then the points $u_{i_1}, \ldots, u_{i_t}$ are collinear.
\end{lemma}

\begin{proof}
Note that the equation (2) implies that if $v_iv_jv_k$ is a face of $G$ and at least one of the points $u_i$, $u_j$, $u_k$ is at infinity, then $u_i$, $u_j$, $u_k$ are collinear.

\begin{figure}[ht]
\begin{center}
\includegraphics{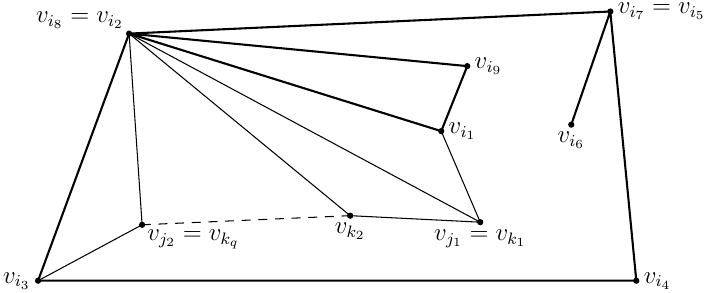}
\end{center}
\end{figure}

For each $1 \le s < t$ let $v_{i_s}v_{i_{s+1}}v_{j_s}$ be the face of $G$ contained in $f$.
We claim that $u_{j_1}=u_{j_2}$. Indeed, let the faces of $G$ covering the angle $v_{i_1}v_{i_2}v_{i_3}$ of $f$ be $v_{i_1}v_{i_2}v_{k_1}$, $v_{k_1}v_{i_2}v_{k_2},\ldots$, $v_{k_q}v_{i_2}v_{i_3}$ where $k_1=j_1$ and $k_q=j_2$. Since $f$ is not a face of $G$, all points $v_{k_\ell}$ lie in the interior of $f$. So all points $u_{k_\ell}$ lie at infinity, but $u_{i_2}$ does not.
Since $v_{k_\ell}v_{i_2}v_{k_{\ell+1}}$ is a face of $G$, the points $u_{k_\ell}$, $u_{i_2}$, $u_{k_{\ell+1}}$ are collinear, which is only possible if $u_{k_\ell}=u_{k_{\ell+1}}$. Thus $u_{j_1}=u_{k_1}=u_{k_2}=\ldots=u_{k_q}=u_{j_2}$.

The same argument shows that $u_{j_{s-1}}=u_{j_s}$ for all $s$. Therefore all $u_{j_s}$ are the same point $u$ at infinity. By induction on $s$ it follows that $u_{i_s}$ lies on the line $u_{i_1}u$ for every~$s$.
\end{proof}

Denote the area of $f$ by $S_f$, it is equal to $S_{v_{i_1} \ldots v_{i_t}}$ interpreted according to (1). Denote $S'_f=S_{u_{i_1} \ldots u_{i_t}}$. Lemma 5 implies that $S'_f=0$ if $f$ is not a face of $G$. If $f$ is a face of $G$, then (2) implies that $S'_f=S_f$.

We have $\sum_f S_f = S_{v_1\ldots v_n}$, sum taken over all bounded faces of $H$ oriented anticlockwise.
Since it is a polynomial equation on the coordinates of the points $v_i$, it is true when $v_i$ are replaced by arbitrary points of $\mathbb C^2$. For every vertex $v_i$ of $H$ the point $u_i$ lies in $\mathbb C^2$, hence
\[
\sum S_f = S_{v_1\ldots v_n} = \sum S'_f
\]
sum taken over all bounded faces of $H$. Now, for every $f$ we have $S_f \ge S'_f$ with strict inequality for at least one $f$, giving a contradiction.
\end{proof}

Theorem 1 follows immediately from Proposition 3.

To deduce Theorem 2 we observe that the area of $P$ is algebraic, so the areas of all faces are algebraic. Hence the variety $X$ is defined over $\overline{\mathbb Q}$ and we are done by the following well-known fact.

\begin{lemma}
If an algebraic variety defined over $\overline{\mathbb Q}$ has finitely many points over $\mathbb C$, then all these points are defined over $\overline{\mathbb Q}$.
\end{lemma}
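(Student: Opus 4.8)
The plan is to prove the lemma by producing enough automorphisms of $\mathbb{C}$. First I would reduce to the affine case. If $V$ lies in a projective space (or, as in our application, a product of projective spaces) and is cut out by homogeneous polynomials with coefficients in $\overline{\mathbb{Q}}$, then the standard affine charts $\{x_i \neq 0\}$ are defined over $\overline{\mathbb{Q}}$, every $\mathbb{C}$-point of $V$ lies in one of them, and a point of such a chart is defined over $\overline{\mathbb{Q}}$ exactly when its affine coordinates are algebraic. So it suffices to treat a closed subvariety $V = Z(f_1,\ldots,f_r) \subseteq \mathbb{A}^n$ with $f_j \in \overline{\mathbb{Q}}[x_1,\ldots,x_n]$ and show that every $P = (a_1,\ldots,a_n) \in V(\mathbb{C})$ has all $a_i \in \overline{\mathbb{Q}}$.

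Next, let $\Gamma$ be the group of field automorphisms of $\mathbb{C}$ fixing $\overline{\mathbb{Q}}$ pointwise. Since the $f_j$ have coefficients in $\overline{\mathbb{Q}}$, applying any $\sigma \in \Gamma$ coordinatewise sends a solution of $f_1 = \cdots = f_r = 0$ to another solution, so $\Gamma$ acts on the finite set $V(\mathbb{C})$; in particular, for every $i$ the set $\{\sigma(a_i) : \sigma \in \Gamma\}$, being the image of the $\Gamma$-orbit of $P$ under projection to the $i$-th coordinate, is finite.

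The crux, and the only place where anything nontrivial happens, is the claim that an element $a \in \mathbb{C}$ transcendental over $\overline{\mathbb{Q}}$ has an infinite $\Gamma$-orbit. Granting it, the lemma follows: if some $a_i \notin \overline{\mathbb{Q}}$ then $a_i$ is transcendental over $\overline{\mathbb{Q}}$ (as $\overline{\mathbb{Q}}$ is algebraically closed), and its orbit would be both finite and infinite. To prove the claim, fix $c \in \mathbb{Q}$; since $a$ is transcendental, $\overline{\mathbb{Q}}(a)$ is a rational function field, so $a \mapsto a + c$ defines a $\overline{\mathbb{Q}}$-automorphism of it. Picking a transcendence basis $B$ of $\mathbb{C}$ over $\overline{\mathbb{Q}}(a)$, this extends by the identity on $B$ to an automorphism of $\overline{\mathbb{Q}}(a)(B)$ (its image is again $\overline{\mathbb{Q}}(a)(B)$, since $\overline{\mathbb{Q}}(a+c) = \overline{\mathbb{Q}}(a)$); as $\mathbb{C}$ is an algebraic closure of $\overline{\mathbb{Q}}(a)(B)$, Steinitz's isomorphism extension theorem extends it to some $\sigma_c \in \Gamma$ with $\sigma_c(a) = a + c$. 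The values $a + c$, $c \in \mathbb{Q}$, are pairwise distinct, so the orbit of $a$ is infinite.

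Alternatively, one can argue more structurally: finiteness of $V(\mathbb{C})$ and the Nullstellensatz force $\dim V = 0$, so the coordinate ring $A$ is a finite-dimensional $\overline{\mathbb{Q}}$-algebra whose reduction is a finite product of finite field extensions of $\overline{\mathbb{Q}}$, hence of copies of $\overline{\mathbb{Q}}$; any $\overline{\mathbb{Q}}$-algebra map $A \to \mathbb{C}$ factors through the reduction, so its image --- recording the coordinates of the corresponding point --- lies in $\overline{\mathbb{Q}}$. I would present the automorphism argument, as it is the more self-contained of the two.
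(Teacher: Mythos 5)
Your proof is correct and complete. The paper itself offers no argument here: Lemma~6 is invoked as a ``well-known fact'' with no proof supplied, so there is nothing to compare against. Your reduction to an affine chart is sound, the action of $\mathrm{Aut}(\mathbb C/\overline{\mathbb Q})$ on the finite set $V(\mathbb C)$ is the standard device, and the key point --- that a transcendental $a$ has infinite orbit --- is established correctly: $\{a\}\cup B$ is algebraically independent over $\overline{\mathbb Q}$, so $a\mapsto a+c$, $\mathrm{id}_B$ gives an automorphism of $\overline{\mathbb Q}(a,B)$ onto itself, and Steinitz extends it to $\mathbb C$ since $\mathbb C$ is an algebraic closure of $\overline{\mathbb Q}(a,B)$. (The use of a transcendence basis and the extension theorem requires the axiom of choice, which is harmless here.) Your alternative argument via the zero-dimensionality of $V$ and the structure of Artinian $\overline{\mathbb Q}$-algebras is also valid and is arguably the ``scheme-theoretic'' proof one would cite; either would serve the paper's purpose.
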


\noindent{\bf Acknowledgements.}
The author is grateful to Dmitrii Korshunov, Misha Verbitsky, Altan Erdnigor and Ilya I. Bogdanov for interesting discussions.

\bibliographystyle{plain}
\bibliography{main}

\begin{thebibliography}{1}

\bibitem{MR3148653}
Aaron Abrams and James Pommersheim.
\newblock Spaces of polygonal triangulations and {M}onsky polynomials.
\newblock {\em Discrete Comput. Geom.}, 51(1):132--160, 2014.

\bibitem{MR4604929}
Aaron Abrams and James Pommersheim.
\newblock An illustrated encyclopedia of area relations.
\newblock {\em Eur. J. Math.}, 9(3):Paper No. 49, 46, 2023.

\bibitem{MR4393087}
Aaron Abrams and Jamie Pommersheim.
\newblock Generalized dissections and {M}onsky's theorem.
\newblock {\em Discrete Comput. Geom.}, 67(3):947--983, 2022.

\bibitem{Har}
J.~Harris.
\newblock {\em Algebraic Geometry: A First Course}.
\newblock Graduate Texts in Mathematics. Springer, 1992.

\bibitem{KS}
E.A. Kasimatis and S.K. Stein.
\newblock Equidissections of polygons.
\newblock {\em Discrete Mathematics}, 85(3):281--294, 1990.

\bibitem{Ken}
R.~Kenyon.
\newblock Available at
  \url{https://gauss.math.yale.edu/~rwk25/openprobs/index.html}; a 9 December
  2019
  \href{https://web.archive.org/web/20191209170902/https://gauss.math.yale.edu/~rwk25/openprobs/index.html}{version}
  cached by the Wayback Machine.

\bibitem{Mon}
Paul Monsky.
\newblock On dividing a square into triangles.
\newblock {\em Amer. Math. Monthly}, 77:161--164, 1970.

\bibitem{Mon90}
Paul Monsky.
\newblock A conjecture of stein on plane dissections.
\newblock {\em Mathematische Zeitschrift}, 205:583--592, 1990.

\end{thebibliography}

{\small
\noindent {\sc Ivan Frolov\\
{\sc Instituto Nacional de Matem\'atica Pura e Aplicada (IMPA), Estrada Dona Castorina, 110, Jardim Bot\^anico, CEP 22460-320, Rio de Janeiro, RJ - Brasil\\
\tt  ivan.il.frolov@gmail.com }
}}

\end{document}